\newtheorem{theorem}{Theorem}
\newtheorem{assumption}{Assumption}
\theoremstyle{definition}
  \pgfplotsset{compat=newest}
\colorlet{savedleftcolor}{.}
\title{Robust Control Barrier Function Design for High Relative Degree Systems: Application to Unknown Moving Obstacle Collision Avoidance}
\author{Kwang Hak Kim, Mamadou Diagne, and Miroslav Krstić% <-this % stops a space
\thanks{This work was supported by the Department of the Navy, Office of Naval Research under grant N00014-23-1-2376. The results and opinions in this paper are solely of the authors and does not reflect the position or the policy of the U.S. Government.\endgraf
K. Kim, M. Diagne, and M. Krstić is with the Department of Mechanical and Aerospace Engineering, U.C. San Diego, 9500 Gilman Drive, La Jolla, CA, 92093-0411, {\tt\small \{kwk001,mdiagne,krstic\}@ucsd.edu}.}
}
\begin{document}

\maketitle
\thispagestyle{empty}
\pagestyle{empty}

%%%%%%%%%%%%%%%%%%%%%%%%%%%%%%%%%%%%%%%%%%%%%%%%%%%%%%%%

\begin{abstract}
   % In safety-critical control, managing safety constraints with high relative degrees and uncertain obstacle dynamics pose significant challenges in guaranteeing safety performance. Robust Control Barrier Functions (RCBFs) offer a potential solution, but the non-smoothness of the standard RCBF definition can pose a challenge when dealing with multiple derivatives in high relative degree problems. In this paper, the RCBF definition was augmented to the marginally more conservative smooth Robust Control Barrier Functions (sRCBF) and extended to the CBF backstepping method. Treating obstacle dynamics as disturbances, our approach reduces the requirement for precise state estimations of the obstacle to an upper bound on the disturbance, which simplifies implementation and enhances the robustness and applicability of CBFs in uncertain environments. We validate our technique through an example problem in which an agent, modeled using a kinematic unicycle model, aims to avoid an unknown moving obstacle. The demonstration shows that the standard CBF backstepping method is not sufficient in the presence of a moving obstacle, especially with unknown dynamics. In contrast, the proposed method successfully prevents the agent from colliding with the obstacle, proving its effectiveness.

   In safety-critical control, managing safety constraints with high relative degrees and uncertain obstacle dynamics pose significant challenges in guaranteeing safety performance. Robust Control Barrier Functions (RCBFs) offer a potential solution, but the non-smoothness of the standard RCBF definition can pose a challenge when dealing with multiple derivatives in high relative degree problems. As a result, the definition was extended to the marginally more conservative smooth Robust Control Barrier Functions (sRCBF). Then, by extending the sRCBF framework to the CBF backstepping method, this paper offers a novel approach to these problems. Treating obstacle dynamics as disturbances, our approach reduces the requirement for precise state estimations of the obstacle to an upper bound on the disturbance, which simplifies implementation and enhances the robustness and applicability of CBFs in dynamic and uncertain environments. Then, we validate our technique through an example problem in which an agent, modeled using a kinematic unicycle model, aims to avoid an unknown moving obstacle. The demonstration shows that the standard CBF backstepping method is not sufficient in the presence of a moving obstacle, especially with unknown dynamics. In contrast, the proposed method successfully prevents the agent from colliding with the obstacle, proving its effectiveness.
\end{abstract}

%%%%%%%%%%%%%%%%%%%%%%%%%%%%%%%%%%%%%%%%%%%%%%%%

\section{Introduction}

% Control Barrier Functions (CBFs), as introduced in \cite{ames2019cbf_theory,wieland_constructive_2007}, are a powerful Lyapunov-like tool which has gained traction in safety-critical control to ensure system safety. By utilizing a quadratic program (QP), one can develop a safety filter from CBFs (CBF-QP) which ensures safety with minimal deviation from the nominal objective \cite{ames2014cbfqp_cruise,ames2017cbfqp_critical}. These works have paved the way for the incorporation of safety-critical control into many applications, including multi-agent systems \cite{jankovic_multiagent_2024}, autonomous driving \cite{ames2014cbfqp_cruise}, and robotics \cite{hsu2015backstepping_cbf}.

A prominent method in safety-critical control for dynamical systems that has gained traction is the concept of Control Barrier Functions (CBFs) introduced in \cite{ames2019cbf_theory,wieland_constructive_2007}. CBFs are a powerful Lyapunov-like tool in which the objective is to ensure safety. Further, \cite{ames2014cbfqp_cruise,ames2017cbfqp_critical}, has proposed the use of a quadratic program (QP) to develop safety filters from CBFs (CBF-QP) which ensures safety with minimal deviation from the nominal objective. These works have paved the way for the incorporation of safety-critical control into many applications, including multi-agent systems \cite{jankovic_multiagent_2024}, autonomous driving \cite{ames2014cbfqp_cruise}, and robotics \cite{hsu2015backstepping_cbf}.

However, implementing CBF-QP safety filters can be challenging when dealing with high and mixed relative degrees. When some or all of the control inputs are absent in the safety constraint's first derivative, it can lead to very limited agency in guaranteeing safety. To address the high relative degree issue, various techniques exist, such as exponential CBFs \cite{ames2019cbf_theory} and high order CBFs \cite{xiao2019hocbf}.  Another method is the CBF backstepping technique introduced in \cite{krstic_nonovershooting_2006}—which has been utilized in Prescribed Time Safety Filters (PTSf) \cite{abel2023prescribed_time} and Stefan Model PDEs \cite{koga_safe_2023,koga2023event}—that employs smart choices of initial gains with backstepping to maintain system safety by enforcing a single target CBF from a chain of CBFs. In the case of mixed relative degrees, methods such as artificially increasing the relative degree of lower inputs \cite{kim_safe_mixedrel_2023} or holding them constant \cite{huang_obstacle_2023} have been used.

One significant application area facing these relative degree challenges is in safety problems utilizing the kinematic unicycle model, particularly when the safety constraint is defined by positional constraints \cite{huang_obstacle_2023,thontepu_control_2023}. This often leads to the CBF-QP safety filter disregarding the steering input, thereby relying only on the forward drive input and resulting in inefficient safety solutions, especially for nonholonomic models like the unicycle. Solely relying on forward drive inputs can limit the system's ability to navigate complex environments or avoiding obstacles effectively.

Another prevalent challenge is in obstacle avoidance with uncertain dynamics. Existing approaches to moving obstacle avoidance using CBFs \cite{jiang_robust_2024, desai_clf-cbf_2022,restrepo_3d_2019} typically assume known obstacle dynamics. While methods that do consider uncertain dynamics offer strategies for handling unknown dynamics, they require an estimation of the obstacle dynamics with a bounded error  \cite{molnar_safety-critical_2023}, which may be difficult to achieve in some applications. In \cite{breeden_robust_2023}, which combined Robust CBFs and high order CBF techniques, only considered up to relative degree two and assumed the additive disturbance to be zero for CBFs with arbitrarily high relative degrees. 

In this paper, we introduce an innovative approach to tackling the high relative degree problem in the presence of disturbances by integrating the Robust CBF framework \cite{jankovic_robust_2018} with the CBF backstepping method \cite{krstic_nonovershooting_2006}. Our extended method addresses worst-case disturbances for arbitrarily high relative degrees and simplifies the challenge of unknown obstacle dynamics by treating these dynamics as disturbances. However, it is well-known that worst-case robustness considerations often lead to non-smooth functions, which can pose challenges when taking multiple derivatives. We mitigate this by introducing a marginally more conservative smooth Robust Control Barrier Function (sRCBF) definition to smoothen the safety constraints of RCBFs.

The paper is structured as follows: Section \ref{prelim} and \ref{motivation} provides standard preliminary definitions and outlines the motivational problem. Section \ref{qp-backstep} reviews the CBF backstepping design algorithm and introduces the QP problem for deriving a minimally intrusive safety filter. Section \ref{extention} extends the backstepping algorithm to sRCBFs for systems with disturbances. Finally, Section \ref{application} applies this extended methodology to a moving obstacle collision avoidance problem with simulations.

\section{Preliminaries} \label{prelim}

We start with a preliminary overview of Control Barrier Functions (CBFs) \cite{ames2019cbf_theory} and Robust Control Barrier Functions (RCBFs) \cite{jankovic_robust_2018}, outlining their definitions and key properties.

Consider the following nonlinear control affine system:
\begin{equation} \label{affine}
    \dot{x} = f(x) + g(x)u,
\end{equation}
with $x \in \mathbb{R}^{n}$ and $u \in \mathbb{R}^{m_1}$.

\textbf{Definition 1}: A scalar-valued differentiable function $h \ : \ \mathbb{R}^n \rightarrow \mathbb{R}$ with the property that $\inf_{x \in \mathbb{R}^n} h(x) < 0$ and $\sup_{x \in \mathbb{R}^n} h(x) > 0$, is defined as a \textit{Barrier Function (BF) candidate} and the set $\mathcal{C} = \{ x \in \mathbb{R}^n \; | \; h(x) \geq 0\}$ is defined as the \textit{safe set}.

% \begin{definition}
%     (Forward Invariant/Safe): The set $\mathcal{C}$ is forward invariant if $\forall x_{0} \in \mathcal{C}, x(t) \in \mathcal{C}$ for $x(0) = x_{0}$ and all $t \in I(x_0)$. Then, the system
%         \begin{equation} \label{lipschitz_affine}
%             \dot{x} = f_{\text{cl}}(x) \coloneqq f(x) + g(x)k(x)
%         \end{equation}
%     is safe w.r.t the set $\mathcal{C}$ if the set $\mathcal{C}$ is forward invariant.
% \end{definition}

\textbf{Definition 2}: A continuously differentiable function $h \; : \; \mathbb{R}^n \rightarrow \mathbb{R}$, is a \textit{Control Barrier Function (CBF)} if there exists an extended class $\mathcal{K}_{\infty}$ function $\alpha \; : \; \mathbb{R} \rightarrow \mathbb{R}$ such that for the control system \eqref{affine}:
    \begin{equation}
        \sup_{u} [L_{f}h(x) + L_{g}h(x)u] \geq -\alpha (h(x)),\label{cbf_cond}
    \end{equation}
    where $L_{f}h(x)$ and $L_{g}h(x)$ are the Lie-derivatives of $h(x)$ defined as:
    \begin{equation}
        L_f h(x) = \frac{\partial h}{\partial x}f(x), \qquad L_g h(x) = \frac{\partial h}{\partial x}g(x).
    \end{equation}

Now, consider another nonlinear control affine system:
\begin{align}
    \dot{x} = f(x) + g(x)u + p(x)d\label{dist_affine},
\end{align}
where $d \in \mathbb{R}^{m_2}$ is the disturbance with an assumption that it is upper bounded by a positive scalar $M$ (i.e., $\|d\| \leq M$).

\textbf{Definition 3}: (\textit{Modified from \cite{jankovic_robust_2018}}) A continuously differentiable function $h \; : \; \mathbb{R}^n \rightarrow \mathbb{R}$ is a \textit{Robust Control Barrier Function (RCBF)} if there exists an extended class $\mathcal{K}_{\infty}$ function $\alpha \; : \; \mathbb{R} \rightarrow \mathbb{R}$ such that for the control system \eqref{dist_affine}:
    \begin{align}
        \sup_{u} [L_{f}h(x) + L_{g}h(x)u - \|L_{p}h(x)\|M] \geq -\alpha (h(x)).\label{rcbf_cond}
    \end{align}

When taking multiple derivatives, as required by many high relative degree problems, the non-smoothness of \eqref{rcbf_cond} can be an issue. By noting the following upper bound of the non-smooth term with a smooth function:
\begin{align}
    \sqrt{\varepsilon + \|L_ph(x)\|^2} \geq \|L_ph(x)\|,
\end{align}
for $\varepsilon > 0$, we introduce the following extended definition of RCBFs.

\textbf{Definition 4}: A continuously differentiable function $h \; : \; \mathbb{R}^n \rightarrow \mathbb{R}$ is a \textit{smooth Robust Control Barrier Function (sRCBF)} if there exists an extended class $\mathcal{K}_{\infty}$ function $\alpha \; : \; \mathbb{R} \rightarrow \mathbb{R}$ such that for the control system \eqref{dist_affine}:
    \begin{align}
        &\sup_{u} \biggl[L_{f}h(x) + L_{g}h(x)u \nonumber\\
        &\qquad - M\sqrt{\varepsilon + \|L_ph(x)\|^2}\biggr]\geq -\alpha (h(x)).\label{srcbf_cond}
    \end{align}
for $\varepsilon > 0$.

With the necessary definitions, we now introduce our motivational problem.

\section{Motivational Problem}\label{motivation}
Consider a safety-critical problem in which an agent must maintain a safe distance from an obstacle. The agent will be modeled as a kinematic unicycle system:
\begin{align}
    \dot{x} &= u_{v} \cos\theta,\label{system_start}\\
    \dot{y} &= u_{v} \sin\theta,\\
    \dot{\theta} &= u_\theta,\label{system_start-1}
\end{align}
where $[x, y]^\top$ are the positional states, $\theta$ is the heading angle, $u_{v}$ is the forward drive input, and $u_{\theta}$ is the steering input. 

Suppose the agent lacks information about the obstacle's dynamics. As such, the agent will treat the obstacle dynamics as unkown disturbances, represented as follows:
\begin{align}
    \dot{x}_d &= d_x,\label{system_end-1}\\
    \dot{y}_d &= d_y\label{system_end},
\end{align}
where $[x_d, y_d]^\top$ denotes the obstacle's position and $d = [d_x, d_y]$ represents disturbances with an upper bound $\|d\| \leq M$, with $M \in \mathbb{R}^{+}$. 
% In many cases, this upper bound can easily be found through the limitations of the obstacle dynamics. In summary, the agent will only be provided with the concurrent position of the obstacle $(x_d, y_d)$, and the disturbance upper bound $M$.

To avoid collisions with the obstacle, a common choice for a candidate CBF is the Euclidean distance-based function:
\begin{align}
    h(x,y,x_d,y_d) = (x-x_d)^2 + (y-y_d)^2 - r^2.
\end{align}
Here, $r \in \mathbb{R}$ specifies the user-defined minimum safety distance.

This CBF, however, suffers from a mixed relative degree problem. Specifically, the first derivative of $h(x,y,x_d,y_d)$ does not provide direct access to $u_\theta$. As such, directly applying the standard CBF approach will result in an inefficient safety filter which cannot steer to achieve safety.

The approach proposed in this paper addresses this issue by using an extention of the CBF backstepping technique in \cite{krstic_nonovershooting_2006} to handle the mixed relative degree problem and the unknown disturbances effectively. We review this method in the next section.

\section{QP-Backstepping CBF Design}\label{qp-backstep}

As discussed, systems with relative degrees greater than one necessitate additional control design techniques. This paper builds on the concept of obtaining a target CBF of relative degree one through the CBF backstepping method as introduced in \cite{krstic_nonovershooting_2006}. In this section, we outline the methodology to obtain the target CBF, translating it into modern CBF terminology, and designing the controller through a Quadratic Program as first introduced in \cite{ames2014cbfqp_cruise}.

\noindent \textbf{CBF Backstepping Design Algorithm.} Consider again the control affine system \eqref{affine} with a desired candidate CBF $h_1(x)$ of relative degree $n > 1$ and with the following assumption:
\begin{assumption}\label{assume1}
    The function $h_1(x)$ is $n$-times differentiable and satisfies
    \begin{align}
        \frac{\partial h_1(x)}{\partial x} \neq 0, \quad \forall x \in \mathcal{C}.
    \end{align}
\end{assumption}

Taking the time derivative of $h_1(x)$ yields
\begin{align}
    \dot{h}_1(x) &= L_f h_1(x),\\
    &= -c_1 h_1(x) + \underbrace{c_1 h_1(x) + L_f h_1(x)}_{h_2(x)},
\end{align}
where $c_1 > 0$. It follows that if $h_2(x) \geq 0$, the inequality condition \eqref{cbf_cond} is satisfied for $h_1(x)$.

This leads to a new candidate CBF given by
\begin{align}
    h_2(x) = c_1 h_1(x) + L_f h_1(x).\label{h2}
\end{align}
However, we must ensure that the initial condition is within the safe set of $h_2(x)$ (i.e., $h_2(x_0) > 0$). To address this, we make the following assumption:

\begin{assumption}
    $h_1(x_0) > 0$.\label{assume_pos_init}
\end{assumption}

The assumption $h_1(x_0) \geq 0$ is common in standard CBF problems; however, it is important to note that we do not allow the initial condition to be on the boundary of the safe set $\mathcal{C}$ as designing a controller that maintains system safety when initialized on the boundary can be infeasible.

Given assumption \ref{assume_pos_init}, we choose
\begin{align}
    c_1 > \max\left\{0, \frac{-L_f h_1(x_0)}{h_1(x_0)}\right\},
\end{align}
which, from \eqref{h2}, ensures that $h_2(x_0) > 0$.

By iterating this method, we obtain the target CBF through a backstepping transformation defined as follows:
\begin{align}
    h_1(x) &= h_1(x),\\
    h_i(x) &= c_{i-1} h_{i - 1}(x) + L_f h_{i-1}(x),
\end{align}
for $i = \{2, \cdots, n\}$ where
\begin{align}
    c_{i-1} > \max\left\{0, \frac{-L_fh_{i-1}(x_0)}{h_{i-1}(x_0)}\right\}.
\end{align}

\noindent \textbf{Quadratic Program (QP).}
Having obtained the target CBF $h_n(x)$ with a relative degree of one, we design a safety override controller that adjusts the nominal control input $u_0(t)$ to the system when it is deemed unsafe.

This is accomplished by solving the  quadratic programming (QP) problem that minimizes the deviation of the overridden control from the nominal control while ensuring the following safety constraint:
\begin{align}
    L_f h_n(x) + L_g h_n(x) u \geq -c_n h_n(x),
\end{align}
where $c_n > 0$, is satisfied \cite{ames2014cbfqp_cruise}. That is, the optimization problem is formulated as:
\begin{align}
    &\qquad u = \min_{u} \| u - u_0 \|^2 \\
    \text{s.t.}\quad &L_f h_n(x) + L_g h_n(x) u \geq -c_n h_n(x).\label{qp_og}
\end{align}

\section{QP-Backstepping Robust CBF Design}\label{extention}

Now, we introduce an extension to the backstepping method to systems with additive disturbances. Consider the control affine system with disturbances as described in \eqref{dist_affine}, and let $h_1(x)$ be a desired candidate CBF with relative degree $n > 1$ and satisfies Assumption \ref{assume1}. 

By leveraging the sRCBF formulation, we systematically account for the worst-case effect of disturbances on the system safety, but avoid any singularities due to non-smoothness. 

For clarity, the following abbreviation will be used from this point:
\begin{align}
    \delta_k(x) \coloneqq \sqrt{\varepsilon_{k} + \|L_ph_{k}(x)\|^2},
\end{align}
with $k \in \mathbb{N}$.

We begin again by computing the time derivative of $h_1(x)$.

\begin{align}
    \dot{h}_1(x) &= L_f h_1(x) + L_p h_1(x) d, \\
    % &= -c_1 h_1(x) \nonumber \\
    % &\quad + c_1 h_1(x) + L_f h_1(x) + L_p h_1(x) d, \\
    &\geq -c_1 h_1(x)\nonumber\\
    &\quad + \underbrace{c_1 h_1(x) + L_f h_1(x) - M\delta_1(x)}_{h_2(x)}.
\end{align}

Subsequently, by following the same reasoning and procedure outlined in Section \ref{qp-backstep}, we derive a similar chain of CBFs.
\begin{align}
    h_1(x) &= h_1(x), \label{chain_1}\\
    h_i(x) &= c_{i-1} h_{i - 1}(x) + L_f h_{i-1}(x) - M\delta_{i-1}(x),\label{chain_2}
\end{align}
with $\varepsilon_{i-1} > 0$ and
\begin{align}
    c_{i-1} > \max\left\{0, \frac{-L_f h_{i-1}(x_0) + M\delta_{i-1}(x_0)}{h_{i-1}(x_0)}\right\},\label{initial_gain}
\end{align}
for $i = \{2, \ldots, n\}$. With the target CBF established, we formulate the QP problem as follows:
\begin{align}\label{qp}
    &\qquad \qquad u = \min_{u} \| u - u_0 \|^2 \\
    \text{s.t.}\quad &L_f h_n(x) + L_g h_n(x)u - M\delta_n(x) \geq -c_n h_n(x).\label{qp1}
\end{align}
where $c_n > 0$.

By applying the Karush-Kuhn-Tucker (KKT) optimality conditions \cite{boyd2004convex}, this QP problem has an explicit solution in the form:
\begin{align}\label{control}
    u = \begin{dcases}
        u_0, &\eta(x,u_0) \geq 0,\\
        u_0 - (L_gh_n(x))^\top\frac{\eta(x,u_0)}{\|L_gh_n(x)\|^2}, &\text{otherwise},
    \end{dcases}
    % u_0 + (L_gh_n(x))^\top\frac{\max\left\{0, -\omega \right\}}{|L_gh_n(x)|^2},\label{control}
\end{align}
where
\begin{align}
    \eta(x,u_0) &\coloneqq L_f h_n(x) + L_g h_n(x)u_0\nonumber\\
    &\quad - M\delta_n(x) + c_n h_n(x).
\end{align}

Note, the effect of the worst-case disturbance is evident in the constraint \eqref{qp1}, where, compared to \eqref{qp_og}, the controller is required to override the nominal control much earlier than the boundary and becomes more conservative as the estimation of $M$ increases. Further, the smoothness factor $\varepsilon$ also contributes to the conservativeness and should be chosen such that the safety filter does not act too conservatively, but still handle the non-smoothness.

This conservativeness is inevitable when considering worst-case disturbances. However, approximating an upper bound on the disturbance is generally easier than estimating the obstacle dynamics, as shown in \cite{molnar_safety-critical_2023}. The maximum velocity of any obstacle can often be directly specified, either through physical limitations or environmental specifications. In contrast, estimating the obstacle dynamics may involve an indeterminate amount of error.

We are now ready to state the main result of this paper.

\begin{theorem}\label{thrm_1}
    Let the states of system \eqref{dist_affine} be initially within and not on the boundary of a desired safe set, i.e., $h_1(x_0) > 0$, and Assumptions \ref{assume1}--\ref{assume_pos_init} hold. Then, the control law \eqref{control}, from the chain of CBFs \eqref{chain_1}--\eqref{chain_2} with initial control gains \eqref{initial_gain} and $c_n > 0$, guarantees $h_1(x(t)) \geq 0$ for all $t \in [t_0, \infty)$, namely, the system remains safe for all nominal controls.
\end{theorem}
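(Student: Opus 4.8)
The plan is to prove the claim by backward induction along the chain of barrier functions \eqref{chain_1}--\eqref{chain_2}, establishing that $h_i(x(t)) \ge 0$ for all $t \ge t_0$ and every index $i$ descending from $n$ to $1$; the conclusion $h_1(x(t)) \ge 0$ is then the final ($i=1$) case. Each inductive step rests on two ingredients: the comparison lemma applied to a differential inequality of the form $\dot h_i \ge -c_i h_i$, together with the strict positivity $h_i(x_0) > 0$ of the initial condition, which is exactly what the gain choice \eqref{initial_gain} delivers (with $h_1(x_0)>0$ supplied by Assumption \ref{assume_pos_init}).

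For the base case $i = n$, I would first note that the constraint \eqref{qp1} is always feasible: since $h_n$ has relative degree one, $L_g h_n(x)$ is nonzero and the constraint defines a nonempty half-space in $u$, so the KKT solution \eqref{control} is well-defined and satisfies $L_f h_n + L_g h_n u - M\delta_n \ge -c_n h_n$. Differentiating $h_n$ along \eqref{dist_affine} gives $\dot h_n = L_f h_n + L_g h_n u + L_p h_n d$. The key step is to absorb the disturbance using the smoothing bound: since $\delta_n(x) = \sqrt{\varepsilon_n + \|L_p h_n\|^2} \ge \|L_p h_n\|$ and $\|d\| \le M$, we have $L_p h_n d \ge -M\|L_p h_n\| \ge -M\delta_n$, which cancels the $+M\delta_n$ produced by the constraint and yields $\dot h_n \ge -c_n h_n$. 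The comparison lemma with $h_n(x_0) > 0$ then gives $h_n(x(t)) \ge h_n(x_0)e^{-c_n(t-t_0)} > 0$ for all $t \ge t_0$.

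For the inductive step, assume $h_{i+1}(x(t)) \ge 0$ for all $t \ge t_0$. Because $h_1$ has relative degree $n$ and $h_i$ is built only from $h_1$ and its lower-order derivatives, the control input is absent from $\dot h_i$, so $\dot h_i = L_f h_i + L_p h_i d$. Applying the same smoothing bound $L_p h_i d \ge -M\delta_i$, and substituting the chain relation \eqref{chain_2} rewritten as $L_f h_i - M\delta_i = h_{i+1} - c_i h_i$, I obtain $\dot h_i \ge -c_i h_i + h_{i+1} \ge -c_i h_i$, where the last inequality uses the inductive hypothesis. The comparison lemma with $h_i(x_0) > 0$ again yields $h_i(x(t)) \ge 0$. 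Taking $i = 1$ completes the induction and proves safety for every nominal control $u_0$.

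The main obstacle I anticipate is the claim, used in the inductive step, that the control input does not appear in $\dot h_i$ for $i < n$ even after the smooth robustifying terms $-M\delta_{i-1}$ are added to the chain. I would verify $L_g h_i \equiv 0$ for all $i < n$ explicitly, checking that differentiating $\delta_{i-1}$ introduces no dependence on $g(x)$, equivalently that the relative-degree-$n$ structure of $h_1$ is preserved under the construction \eqref{chain_2}. The remaining algebra, namely the cancellation of the $\pm M\delta$ terms and the comparison-lemma estimates, is routine once this structural fact is in hand.
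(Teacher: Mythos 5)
Your proof is correct and follows essentially the same route as the paper: positivity of each $h_i(x_0)$ obtained inductively from the gain choice \eqref{initial_gain}, differential inequalities along the chain, the comparison lemma, and backward induction from $h_n$ down to $h_1$. If anything, you are slightly more careful than the paper's own argument in explicitly bounding $L_p h_i\, d \ge -M\delta_i(x)$ for the intermediate indices $i<n$ (the paper writes that step as an equality) and in flagging that $L_g h_i \equiv 0$ must be verified for the $\delta$-augmented chain.
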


\begin{proof}
    The proof follows similar steps to the proof of Theorem 1 in \cite{abel2023prescribed_time}. 
    
    Given $ h_{i-1}(x_0) > 0 $, the initial gain \eqref{initial_gain} is designed to guarantee $ h_i(x_0) > 0 $. Since we know $ h_1(x_0) > 0 $, the gain choice
    \begin{align}
        c_1 > \max\left\{0, \frac{-L_f h_1(x_0) + M\delta_1(x_0)}{h_1(x_0)}\right\},
    \end{align}
    ensures $ h_2(x_0) > 0 $. Thus, by induction, we can conclude $ h_i(x_0) > 0 $ for $ i = \{2, \cdots, n\}$.

    Then, taking the time derivative of the chain of CBFs \eqref{chain_1}--\eqref{chain_2} yields
    \begin{align}
        \frac{d}{dt} h_j(x(t)) &= -c_j h_j(x(t)) + h_{j+1}(x(t)),\\
        \frac{d}{dt} h_n(x(t)) &=L_f h_n(x(t))\nonumber\\
        &\quad + L_g h_n(x(t))u - M\delta_n(x)\\
        &\geq -c_n h_n(x(t)),
    \end{align}
    for $j = \{1, \cdots, n-1\}$. Applying the Comparison lemma in parallel with the variation of constants formula, for $ t \in [t_0, \infty) $, results in
    \begin{align}
        h_j(x(t)) &= h_j(x(t_0)) e^{-c_j (t - t_0)}\nonumber\\
        &\quad + \int_{t_0}^{t} h_{j+1}(x(\tau)) e^{-c_j (t - \tau)} d\tau, \label{h_j}\\
        h_n(x(t)) &\geq h_n(x(t_0)) e^{-c_n (t - t_0)}, \label{h_n}
    \end{align}
    for $j = \{1, \cdots, n-1\}$. 
    
    As established before, $ h_n(x(t_0)) > 0 $, which implies from \eqref{h_n} that $ h_n(x(t)) > 0 $ for all $ t \in [t_0, \infty) $. Now, substituting \eqref{h_n} into \eqref{h_j} for $ j = n - 1 $ results in
    \begin{align}
        h_{n-1}(x(t)) &\geq \underbrace{h_{n-1}(x(t_0)) e^{-c_{n-1} (t - t_0)}}_{>0}\nonumber\\
        &\quad + \underbrace{h_n(x(t_0)) \int_{t_0}^{t} e^{-(c_n + c_{n-1})(t - \tau)(\tau - t_0)} d\tau}_{\geq 0}\\
        &\geq h_{n-1}(x(t_0)) e^{-c_{n-1} (t - t_0)} > 0.\label{h_n_1}
    \end{align}
    
    By using backward strong induction with \eqref{h_n} and \eqref{h_n_1}, one can observe that
    \begin{align}
        h_1(x(t)) \geq h_1(x(t_0)) e^{-c_1 (t - t_0)}.
    \end{align}
    Thus, $ h_1(x(t)) \geq 0 $ for all $ t \in [t_0, \infty) $.
\end{proof}

With this new approach established, we return to our motivational problem.

\section{Collision Avoidance with Unknown Obstacle Dynamics}\label{application}

\subsection{Augmented System}
Recall our original problem of a unicycle agent with the goal of avoiding an unknown moving obstacle. First, we build an augmented system which includes both the agent's states whose dynamics are governed by \eqref{system_start}--\eqref{system_start-1} and the obstacle states driven by an unknown disturbance  defined in  \eqref{system_end-1}--\eqref{system_end}. Furthermore, to avoid mixed relative degrees of the system  \eqref{system_start}--\eqref{system_start-1} with respect to the positional constraint, we introduce an additional state $v$ to raise the relative degree of the input signal $u_{v}$, essentially modifying the model from a unicycle to a simplified bicycle model \cite{rahman_driver_2021,rajamani_vehicle_2012}. This results in a homogeneous relative degree two problem. 

The full system is as follows:
\begin{align}
    \dot{\mathbf{x}}
    =
    \begin{bmatrix}
        \dot{x}\\
        \dot{y}\\
        \dot{v}\\
        \dot{\theta}\\
        \dot{x}_d\\
        \dot{y}_d
    \end{bmatrix}
    =
    \underbrace{
    \begin{bmatrix}
        v\cos\theta\\
        v\sin\theta\\
        0\\
        0\\
        0\\
        0
    \end{bmatrix}}_{f(\mathbf{x})}
    +
    \underbrace{
    \begin{bmatrix}
        0\\
        0\\
        u_{v}\\
        u_{\theta}\\
        0\\
        0
    \end{bmatrix}}_{g(\mathbf{x})u}
    +
    \underbrace{
    \begin{bmatrix}
        0\\
        0\\
        0\\
        0\\
        d_x\\
        d_y
    \end{bmatrix}}_{p(\mathbf{x})d}
\end{align}

\begin{figure}[ht!]
    \centering
    \includegraphics[width=0.95\linewidth]{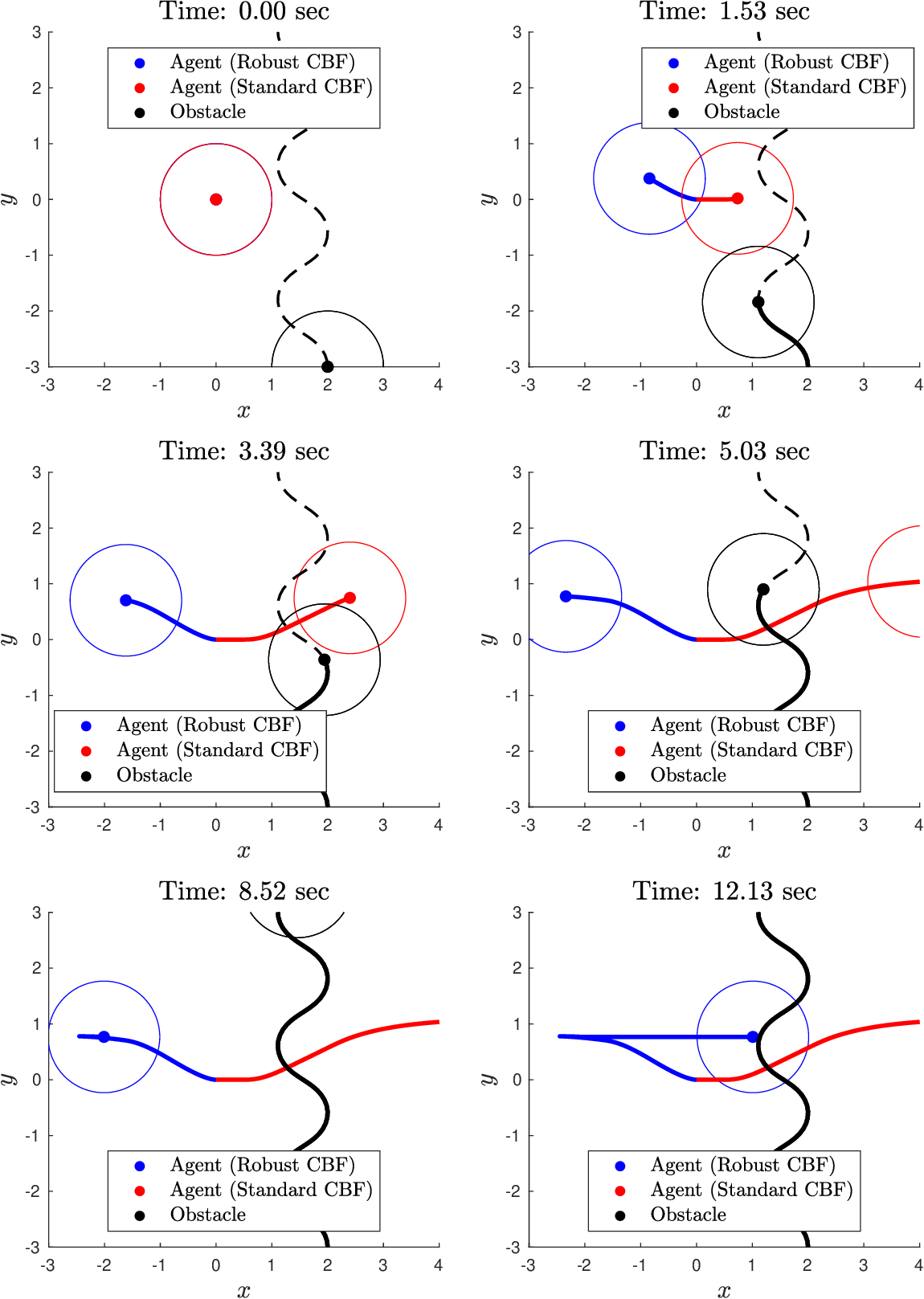}
    \caption{System trajectory of agents in the presence of an obstacle (black) with unknown dynamics. The agent using the standard CBF (red) fails to avoid collision, while the agent considering the worst-case disturbance (blue) remains safe from the obstacle.}
    \label{fig:sim}
\end{figure}

\subsection{Safety Filter Design}
Then, consider again the desired candidate CBF:
\begin{align}
    h_1(\mathbf{x}) &= (x - x_d)^2 + (y - y_d)^2 - r^2
\end{align}
and we proceed with the CBF backstepping design method. Taking the time derivative of $h_1(\mathbf{x})$,
\begin{align}
    \dot{h}_1(\mathbf{x}) &= 2(x-x_d)v\cos\theta + 2(y-y_d)v\sin\theta\nonumber\\
    &\quad -2(x-x_d)d_x - 2(y-y_d)d_y,\\
    &\geq 2(x-x_d)v\cos\theta \nonumber\\
    &\quad  + 2(y-y_d)v\sin\theta - M\delta_1(\mathbf{x}),\\
    &= -c_1 h_1(\mathbf{x})\nonumber\\
    &\quad + \underbrace{c_1 h_1(\mathbf{x})  + L_fh_1(\mathbf{x}) -M\delta_1(\mathbf{x})}_{h_2(\mathbf{x})},
\end{align}
yields
\begin{align}
    h_2(\mathbf{x}) &\coloneqq c_1 h_1(\mathbf{x})  + L_fh_1(\mathbf{x}) -M\delta_1(\mathbf{x}),
\end{align}
where 
\begin{align}
    &\|L_ph_{1}(\mathbf{x})\| = 4(x-x_d)^2 + 4(y-y_d)^2\label{Lph}.
\end{align}

Now, we choose 
\begin{align}
    c_1 > \max\left\{0, \frac{-L_fh_1(\mathbf{x}_0) + M\delta_1(\mathbf{x}_0)}{h_1(\mathbf{x}_0)}\right\},
\end{align}
such that $h_2(\mathbf{x}_0) > 0$. We have shown in Theorem \ref{thrm_1} that enforcing $h_2(\mathbf{x}) \geq 0$ will ensure that the system will be safe. Taking the time derivative of $h_2(\mathbf{x})$ yields
\begin{align}
    \dot{h}_2(\mathbf{x}) &= L_f h_2(\mathbf{x}) + L_g h_2(\mathbf{x}) u + L_p h_2(\mathbf{x}) d, \\
    % &= c_1 \left(L_f h_1(\mathbf{x}) + L_p h_1(\mathbf{x}) d \right) + L_f^2 h_1(\mathbf{x}) + L_g L_f h_1(\mathbf{x}) u \nonumber\\
    % &\qquad + L_p L_f h_1(\mathbf{x}) d - M \left(\left(L_f L_p h_1(\mathbf{x}) + L_g L_p h_1(\mathbf{x}) u + L_p^2 h_1(\mathbf{x}) d \right) \frac{L_p h_1(\mathbf{x})}{\|L_p h_1(\mathbf{x})\|}\right)\\
    &\geq L_f h_2(\mathbf{x}) + L_g h_2(\mathbf{x}) u  - M\delta_2(\mathbf{x}).
\end{align}

From  \eqref{qp} and \eqref{qp1}, our  QP problem is formulated as follows:
\begin{align}
    &\qquad \qquad u = \min_{u} \| u - u_0 \|^2 \\
    \text{s.t.}\quad &L_f h_2(\mathbf{x}) + L_g h_2(\mathbf{x}) u - M\delta_2(\mathbf{x}) \geq -c_2 h_2(\mathbf{x})\label{qp_uni},
\end{align}
with the explicit solution in the form \eqref{control}. The detailed expression for each of the terms is given in the Appendix.

\begin{figure}[t]
    \centering
    \includegraphics[width=0.9\linewidth]{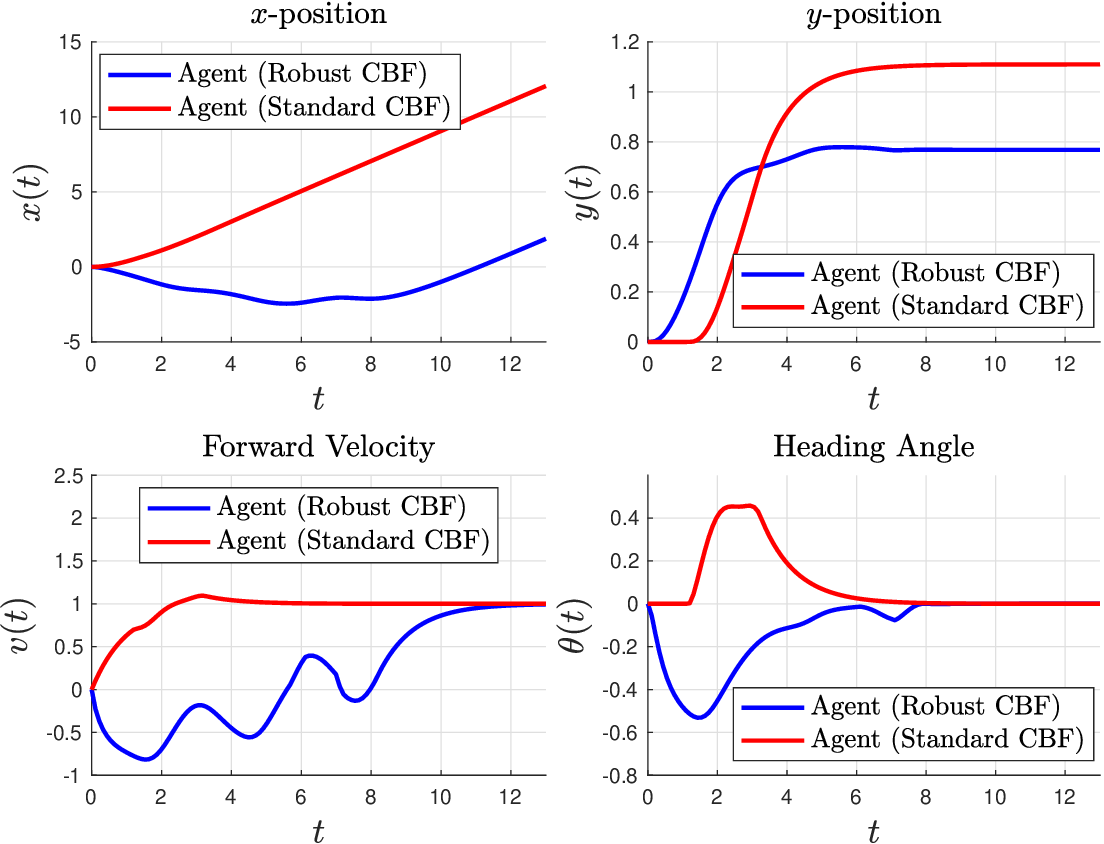}
    \caption{System states of the agents representing the positional states $[x(t),y(t)]$, forward velocity $v(t)$, and the heading angle $\theta(t)$.}
    \label{fig:states}
\end{figure}

\subsection{Simulations: unknown moving obstacle avoidance}
We present a simulation to demonstrate the effectiveness of the proposed method. The simulation involves two agents: one agent that uses the standard CBF backstepping method from \cite{krstic_nonovershooting_2006}, while the other employs the new sRCBF-based CBF backstepping method, which accounts for obstacle dynamics as unknown disturbances. The agents were both initialized at $[x(0), y(0), v(0), \theta(0)] = [0,0,0,0]$ and the safety distance was chosen to be $r = 2$ with a smoothness factor $\varepsilon_1 = \varepsilon_2 = 0.01$.

The obstacle was modeled as another unicycle model with the dynamics
\begin{align}
    \dot{x}_d &= v_d(t)\cos\theta_d,\\
    \dot{y}_d &= v_d(t)\sin\theta_d,\\
    \dot{\theta}_d &= \omega_d(t),
\end{align}
and initialized at $[x_{d}(0),y_{d}(0),\theta_{d}(0)] = [2,-3,\frac{\pi}{2}]$ with $v_d(t) = 1$ and $\omega_d(t) = 2\cos(2t)$. Consequently, the provided upper bound to the agent was $M = 1$ and the initial control gain for both agents were chosen to be $c_1 = 3$ and $c_2 = 1$.

The nominal control objective for the agents was defined to move with a heading angle of $\theta = 0$ and a constant velocity of $v = 1$, i.e., to move to the right. Hence, the nominal control inputs were defined as $u_{v0} = -k_1(v - 1)$ and $u_{\theta0} = -k_2 \theta$, where $k_1 = k_2 = 1$.

Fig. \ref{fig:sim} presents the resulting system trajectories and Fig. \ref{fig:states} shows the evolution of the agents' states under the standard CBF backstepping design and the sRCBF backstepping design. In Fig. \ref{fig:sim}, the agent which does not consider the dynamics of the obstacle violates safety, as seen at $t = 3.39s$, resulting in a collision. In contrast, the agent accounting for the worst-case disturbance first avoided the obstacle around $t = 1.53s$, waited for it to clear approximately at $t = 5.03s$, and then resumed its nominal task once it was safe after $t = 8.52s$. The sRCBF agent initially retreats backwards and only proceeds when the obstacle was well clear, which validates our approach.

\section{Conclusion}
This paper presented a novel method of addressing high relative degree safety constraints with disturbances by integrating the sRCBF definition to the CBF backstepping technique. By incorporating the worst-case disturbance effects directly into the control design, our method simplified the requirements for uncertain environments. In the context of avoiding an obstacle with unknown dynamics, our method provides a safety guarantee with a relatively easy-to-obtain upper bound of the obstacle velocity. The proposed method was validated through a simulation, which demonstrated a successful collision avoidance with an uncertain obstacle. Future work will explore further applications of this method as well as further improvements in collision avoidance when given more detailed information about the obstacle dynamics, such as the angular velocity.

\section*{Appendix}
Abbreviated expression in the QP constraint \eqref{qp_uni}.

\begin{align}
    L_fh_2(\mathbf{x}) &= 2c_1\left((x-x_d)v\cos\theta + (y-y_d)v\sin\theta\right) + 2v^2\nonumber\\
    &\quad - \frac{4M}{\delta_1(\mathbf{x})}\bigl((x-x_d)v\cos\theta + (y-y_d)v\sin\theta\bigr)\\
    L_gh_2(\mathbf{x})u &= \Bigl(2(x-x_d)\cos\theta + 2(y-y_d)\sin\theta\Bigr)u_{v}\nonumber\\
    &\quad+\Bigl(2(y-y_d)v\cos\theta - 2(x-x_d)v\sin\theta\Bigr)u_{\theta}\\
    L_ph_2(\mathbf{x})d &= \Biggl(\left(\frac{4M}{\delta_1(\mathbf{x})}-2c_1\right)(x-x_d) -2v\cos\theta \Biggr)d_x\nonumber\\
    &\quad +\Biggl(\left(\frac{4M}{\delta_1(\mathbf{x})}-2c_1\right)(y-y_d) -2v\sin\theta \Biggr)d_y
\end{align}

% \begin{align}
%     L_fh_2(\mathbf{x}) &= 2c_1(x-x_d)v\cos\theta \nonumber\\
%     &\quad + 2c_1(y-y_d)v\sin\theta + 2v^2 \nonumber\\
%     &\quad - \frac{4M}{\delta_1(\mathbf{x})}\biggl((x-x_d)v\cos\theta\nonumber\\
%     &\qquad \qquad + (y-y_d)v\sin\theta\biggr)\\
%     L_gh_2(\mathbf{x})u &= \Bigl(2(x-x_d)\cos\theta + 2(y-y_d)\sin\theta\Bigr)u_{v}\nonumber\\
%     &\quad+\Bigl(2(y-y_d)v\cos\theta - 2(x-x_d)v\sin\theta\Bigr)u_{\theta}\\
%     L_ph_2(\mathbf{x})d &= \Biggl(-2c_1(x-x_d) -2v\cos\theta \nonumber\\
%     &\qquad + \frac{4M}{\delta_1(\mathbf{x})}\Bigl(x-x_d\Bigr)\Biggr)d_x\nonumber\\
%     &\quad +\Biggl(-2c_1(y-y_d) -2v\sin\theta \nonumber\\
%     &\qquad + \frac{4M}{\delta_1(\mathbf{x})}\Bigl(y-y_d\Bigr)\Biggr)d_y
% \end{align}

%%%%%%%%%%%%%%%%%%%%%%%%%%%%%%%%%%%%%%%%%%%%%%%%%%%%%%%%
\balance
%%%%%%%%%%%%%%%%%%%%%%%%%%%%%%%%%%%%%%%%%%%%%%%%%%%%%
% \newpage
\bibliographystyle{root}
\bibliography{root}
%%%%%%%%%%%%%%%%%%%%%%%%%%%%%%%%%%%%%%%%%%%%%%%%%%%%%%%%
\end{document}